\newtheorem{theorem}{Theorem}[section]
\newtheorem{definition}[theorem]{Definition}
\newtheorem{remark}[theorem]{Remark}
\title{On the Existence and Uniqueness of Symmetric Structures Generating Complete Ordered Pairs}
\author{Nicolás Agustín Martínez\\
\small Independent Researcher\\
\small \texttt{martinez.nicolas@alu.frlp.utn.edu.ar}}
\date{\today}
\begin{document}
\maketitle

\begin{abstract}
This work introduces a new class of symmetric matrix structures, referred to as \emph{harmonic structures}, which enable the generation of all possible directed transitions \((x_i, x_{i+1})\) over a set of \(n\) symbols, without internal repetitions. Unlike other combinatorial constructions, these structures are defined solely by the relative position of elements, not their concrete values. Two structures are considered equivalent if one can be obtained from the other through row permutation and/or global relabeling. Under this notion, it is shown that for \(n = 4\) there exists a single non-trivial structure, and that for \(n = 6\) there are exactly two non-equivalent ones. Harmonic matrices are built using specially designed permutators whose properties guarantee symmetry and complete coverage. Their internal hierarchy, extensibility, and structural rarity within the space of permutations are analyzed. Furthermore, it is shown how these matrices can be used to generate valid Sudoku boards deterministically, without resorting to random methods or post-validation. These properties open new perspectives in combinatorics, algorithm design, and systems based on positional encoding. Notably, \textbf{these permutators allow the construction of harmonic matrices for arbitrary even values of \(n\), enabling universal scalability of the method.}
\end{abstract}

\section{Introduction}

Permutations and combinations of symbols are central tools in the study of finite structures.  
In this work, we introduce a particular family of square matrices, which we call \emph{harmonic matrices}. These matrices are characterized by \textbf{covering exactly once} all distinct ordered pairs \((a,b)\) with \(a \neq b\), distributing them across adjacent transitions within their rows.

Beyond this property of complete coverage, harmonic matrices exhibit a remarkable internal symmetry that links them to directed decompositions of the complete graph \(K_n\). They also display a structure of \emph{hierarchical nesting}: each matrix of order \(n\) contains, as an explicit substructure, a harmonic matrix of order \(n-2\). This allows them to be organized into a sequence of increasing complexity without losing their fundamental combinatorial properties.

\subsection{Formal definition}

Let \([n] = \{1, \dots, n\}\) and let \(S_n\) denote the symmetric group of permutations over \([n]\).

\begin{definition}[\textbf{Harmonic matrix}]
A matrix \(M = (m_{i,j}) \in [n]^{\,n \times n}\) is called \emph{harmonic} if it satisfies the following two conditions:

\begin{enumerate}
    \item[(i)] Each row \((m_{i,1}, \dots, m_{i,n})\) is a permutation of \([n]\).
    \item[(ii)] The set of directed pairs
        \[
          E(M) := \Bigl\{ \bigl(m_{i,j},\, m_{i,j+1} \bigr) \;:\; 1 \le i \le n,\; 1 \le j \le n - 1 \Bigr\}
        \]
        contains \emph{exactly once} each pair \((a, b) \in [n]^2\) with \(a \ne b\).
\end{enumerate}
That is, every possible transition \(a \to b\) with \(a \ne b\) appears exactly once as a consecutive pair in some row of \(M\).
\end{definition}

We will use the notation \(A_\mu^n\) to refer to harmonic matrices of order \(n\), where the subscript \(\mu\) indicates that the above conditions are satisfied. In contrast, matrices without a subscript, such as \(A\), \(B\), or \(C\), will be considered arbitrary matrices with no guarantee of harmonicity.

\subsection{Examples}

The following are two examples of harmonic matrices. Each of them traverses exactly once every directed pair \((a, b)\) with \(a \ne b\), through consecutive transitions along their rows:

\begin{center}
\begin{minipage}{0.45\textwidth}
\centering
\textbf{Harmonic matrix of order 4}
\[
A_\mu^4 =
\begin{pmatrix}
1 & 2 & 3 & 4\\
2 & 4 & 1 & 3\\
3 & 1 & 4 & 2\\
4 & 3 & 2 & 1
\end{pmatrix}
\]
\end{minipage}
\hfill
\begin{minipage}{0.45\textwidth}
\centering
\textbf{Harmonic matrix of order 6}
\[
A_\mu^6 =
\begin{pmatrix}
1 & 2 & 3 & 4 & 5 & 6 \\
2 & 4 & 1 & 6 & 3 & 5 \\
3 & 1 & 5 & 2 & 6 & 4 \\
4 & 6 & 2 & 5 & 1 & 3 \\
5 & 3 & 6 & 1 & 4 & 2 \\
6 & 5 & 4 & 3 & 2 & 1
\end{pmatrix}
\]
\end{minipage}
\end{center}

\subsection{Equivalence between harmonic matrices}

Sometimes, two matrices may appear different but actually represent the same structure. This occurs when the differences are solely due to:

\begin{itemize}
    \item a global \emph{relabeling} of the symbols (e.g., renaming 1→A, 2→B, etc.), and/or
    \item a \emph{reordering} of the rows.
\end{itemize}

When this happens, we say that the matrices are \textbf{equivalent} or \emph{isomorphic}.

\begin{definition}[\textbf{Isomorphism}]
Let \(M, N\) be harmonic matrices of order \(n\).  
We say that \(M\) and \(N\) are \emph{isomorphic} if there exist \(\sigma \in S_n\) (a consistent renaming of symbols) and \(\tau \in S_n\) (a permutation of rows) such that:
\[
  N_{\tau(i), j} = \sigma\bigl(m_{i, j}\bigr),
  \quad \text{for all } 1 \le i, j \le n.
\]
We denote this relation by \(M \cong N\).
\end{definition}

\noindent\textbf{Example.}  
The following matrices are structurally equivalent:

\[
A_\mu^4 =
\begin{pmatrix}
1 & 2 & 3 & 4\\
2 & 4 & 1 & 3\\
3 & 1 & 4 & 2\\
4 & 3 & 2 & 1
\end{pmatrix}
\quad
B_\mu^4 =
\begin{pmatrix}
1 & 3 & 4 & 2\\
3 & 2 & 1 & 4\\
4 & 1 & 2 & 3\\
2 & 4 & 3 & 1
\end{pmatrix}
\]
We can obtain \(B_\mu^4\) from \(A_\mu^4\) by applying the relabeling:  
\(\sigma(1) = 1,\; \sigma(2)=3,\; \sigma(3)=4,\; \sigma(4)=2\),  
and then reordering the rows. Therefore:
\[
A_\mu^4 \cong B_\mu^4.
\]

\subsection{Non-equivalence}

Conversely, we say that two harmonic matrices are \emph{not} equivalent if there is no combination of relabeling and row reordering that transforms one into the other.

\medskip
\noindent\textbf{Example.}  
Consider the following matrices of order 6:

\[
A_\mu^6 =
\begin{pmatrix}
1 & 2 & 3 & 4 & 5 & 6\\
2 & 4 & 1 & 6 & 3 & 5\\
\vdots
\end{pmatrix}
\quad
B_\mu^6 =
\begin{pmatrix}
1 & 2 & 3 & 4 & 5 & 6\\
2 & 4 & 6 & 1 & 3 & 5\\
\vdots
\end{pmatrix}
\]

At first glance they appear similar, but a closer analysis of their adjacent transitions reveals that the total set of pairs \(a \to b\) appearing in one matrix cannot be transformed into that of the other using only relabeling and row reordering.

This means that these two matrices represent distinct structures:  
\[
A_\mu^6 \not\cong B_\mu^6
\]

This criterion allows us to precisely distinguish when two matrices should be considered essentially different within the theory.

\subsection{Existence and uniqueness of the coverage}

\begin{theorem}
For all \(n \ge 2\), there exists at least one harmonic matrix of order \(n\).  
Moreover, every harmonic matrix covers \emph{exactly once} each distinct pair \((a, b)\) with \(a \neq b\).
\end{theorem}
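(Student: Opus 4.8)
The plan is to recast the object in graph-theoretic language and then split the argument into the counting part (the ``moreover'') and the construction part (existence). First I would reformulate the definition: by condition (i) each row is a permutation of $[n]$, hence a directed Hamiltonian path on the vertex set $[n]$, and its $n-1$ consecutive pairs are exactly the $n-1$ arcs of that path. A harmonic matrix of order $n$ is therefore the same datum as a partition of the $n(n-1)$ arcs of the complete symmetric digraph $K_n^{*}$ into $n$ arc-disjoint directed Hamiltonian paths. This is the viewpoint I would carry throughout.

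The ``moreover'' is then pure counting and I would dispose of it first. The $n$ rows contribute exactly $n(n-1)$ consecutive ordered pairs counted with multiplicity, while there are exactly $n(n-1)$ ordered pairs $(a,b)$ with $a\ne b$. Since the number of available slots equals the number of targets, covering every pair \emph{at least} once is equivalent to covering every pair \emph{at most} once, i.e.\ exactly once. Thus the coverage is automatically tight, and this computation simultaneously records the balance condition that any existence construction must meet (each symbol occurs once as a first entry of a row and once as a last entry).

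For existence I would argue constructively, reinforced by the nesting structure mentioned in the introduction. For even $n$ the cleanest route is to take the classical decomposition of the undirected complete graph $K_n$ into $n/2$ Hamiltonian paths and orient each of them in both directions; the two orientations of the $n/2$ paths yield $n$ arc-disjoint directed Hamiltonian paths exhausting all $n(n-1)$ arcs, that is, a harmonic matrix. Equivalently, the same family is produced by a rotational permutator on $\mathbb{Z}_n$: a single base row together with its $n$ cyclic shifts, the step-differences being chosen as a sequencing of $\mathbb{Z}_n$. I would then invoke the hierarchical nesting, extending a harmonic matrix of order $n-2$ to one of order $n$ by inserting the two new symbols so as to realize exactly the new arcs, turning the problem into an induction in steps of two, with the immediate base case $n=2$ (rows $12$ and $21$).

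The hard part will be the odd case, and concretely the odd base case $n=3$. For odd $n$ both of the above routes degenerate: $K_n$ does not decompose into Hamiltonian paths at all, since $n/2$ is not an integer, but into Hamiltonian \emph{cycles}, and a cycle is not a row; and the rotational permutator fails because the differences $1,\dots,n-1$ sum to $\binom{n}{2}\equiv 0 \pmod n$ for odd $n$, forcing the first and last entries of each shifted row to coincide. At $n=3$ this resistance is in fact fatal and can be made precise: modelling the six permutations of $[3]$ as edges of a graph whose vertices are the six ordered pairs $(a,b)$ with $a\ne b$, a harmonic matrix would be a perfect matching of that graph, but the graph is a disjoint union of two triangles and admits no perfect matching. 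Consequently the genuine obstacle is to furnish a construction for odd orders that does not rely on cyclic shifts or bidirectional orientation; this is the step I expect to absorb essentially all of the difficulty, and it is where the statement as worded is most fragile.
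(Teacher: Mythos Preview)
Your treatment of the ``moreover'' clause is correct but more elaborate than the paper's: in the paper this is dismissed in one line as an immediate consequence of condition~(ii) of the definition, which already demands that each pair appear \emph{exactly} once. Your counting observation that $n(n-1)$ slots meet $n(n-1)$ targets is sound and explains why ``at least once'' and ``at most once'' are interchangeable here, but it is not needed for the statement as written.

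For existence in the even case, your approach and the paper's coincide. The paper defers to the explicit permutator $\beta_\pi$ of Section~\ref{sec:construccion}, and that zig-zag displacement $\rho(i)=\pm 2$ is precisely the classical Walecki construction you invoke: each undirected Hamiltonian path in Walecki's decomposition of $K_n$, taken in both orientations, gives two rows of the harmonic matrix, and the pair $(\text{row},\text{reversed row})$ is exactly what the identity $\beta_\pi^{\,n/2}=I^{*}$ encodes. So on even $n$ the two routes produce the same object.

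Where your proposal diverges sharply from the paper is that you have located a genuine defect in the \emph{statement}, not merely in your own argument. The paper's proof sketch points only to the $\beta_\pi$ construction, which is defined solely for even $n$; nothing in the paper addresses odd $n$ at all. Your perfect-matching analysis for $n=3$ is correct: the six permutations of $[3]$ form two disjoint triangles on the six ordered pairs, so no three of them can partition the pair set, and hence no harmonic matrix of order $3$ exists. The theorem as worded (``for all $n\ge 2$'') is therefore false. Everything else in the paper --- the definition of $\beta_\pi$, the nesting operator $\mathcal{R}_\mu$, the examples --- is restricted to even $n$, so the intended statement is almost certainly existence for all \emph{even} $n\ge 2$, and with that correction your even-$n$ argument and the paper's agree.
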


\smallskip
\noindent\textit{Proof sketch.}  
An explicit construction is provided in Section~\ref{sec:construccion}, where a family of matrices satisfying Definition~1 is exhibited.  
The uniqueness of the coverage follows directly from condition (ii). \hfill\(\square\)

\subsection{Main results}

Based on these criteria, this work establishes the following:

\begin{itemize}
    \item For \(n = 4\), there exists a \textbf{unique} harmonic structure.
    \item For \(n = 6\), there are exactly \textbf{two} distinct structures, whose properties are analyzed.
    \item Some of these structures can be systematically constructed from an initial vector and a special permutator \(\beta_\pi\), or, hypothetically, derived from it.
    \item There exists a fundamental structure that is \textbf{extensible} for all even values of \(n\) (theoretically up to \(n \to \infty\)).
\end{itemize}

This highly constrained behavior stands in stark contrast to the factorial growth of the space of possible permutations, highlighting the rarity, elegance, and mathematical potential of these configurations.

\subsection{Structural properties}

Beyond their value as a combinatorial classification, these structures possess exceptional formal properties:

\begin{enumerate}[label=\textbf{P\arabic*.}]
    \item \textbf{Multiple symmetries.} The structure exhibits symmetry with respect to its vertical and horizontal axes, as well as both diagonals, all simultaneously.

    \item \textbf{Equivalent row–column coverage.}  
    The set of directed pairs generated by consecutively traversing the \emph{rows} is identical to the one generated by traversing the \emph{columns}.

    \item \textbf{Hierarchical nesting.}  
    From a harmonic matrix of higher order, it is possible to extract—without additional computation—a substructure that constitutes another harmonic matrix of lower order, following a specific methodology. This property will be further developed later in the article.
\end{enumerate}

\subsection{Formal construction and extensibility}

The construction of at least one such structure, which we refer to as the \emph{fundamental harmonic structure}, can be achieved via a specific permutator, \(\beta_\pi\), defined by a piecewise positional function. Unlike others obtained through computational exploration or analytical methods, this one can be theoretically extended to any even \(n\), even in the limit as \(n \to \infty\).

The study also reveals additional derived families, termed \emph{first- and second-order degenerate structures}, which arise by modifying or composing the permutator \(\beta_\pi\). Although they exhibit attenuated symmetries or partially broken uniqueness patterns, they still maintain a remarkable internal organization.

\medskip

The article will proceed with a concrete exposition of the structure of order \(n = 4\), as a first representative example.

\section{General construction of the harmonic structure}
\label{sec:construccion}

\subsection{The initial vector and the permutator \(\beta_\pi\)}\label{subsec:beta_pi}

Let \(\vec{a} = (1,2,\dots,n)\), with \(n\) even, be the \emph{starting vector} whose entries label unique positions within the system.  
We aim to construct a square matrix \(A_\mu\) of order \(n\) such that  
(i) in each row, the transitions \((a_i \to a_{i+1})\) do not repeat in any other row, and  
(ii) \(A_\mu\) exhibits double symmetry with respect to both its main and secondary diagonals.

The key operator is the \textbf{permutator} \(\beta_\pi\), a permutation matrix that reorders the elements of \(\vec{a}\) row by row according to a displacement function \(\rho \colon \{1,\dots,n\} \to \mathbb{Z}\).

\begin{definition}[\textbf{Permutator \(\beta_\pi\)}]\label{def:beta_pi}
For even \(n\), we define \(\beta_\pi \in \{0,1\}^{n \times n}\) as
\[
[\beta_\pi]_{i,j} \;=\;
\begin{cases}
1 & \text{if } j \equiv i + \rho(i),\\[2pt]
0 & \text{otherwise},
\end{cases}
\qquad 1 \le i,j \le n,
\]
where
\[
\rho(i) =
\begin{cases}
1, & i = 1,\\[2pt]
2(-1)^{\,i}, & 1 < i < n,\\[2pt]
-1, & i = n.
\end{cases}
\]
\end{definition}

The alternating pattern of \(\rho(i)\) generates a displacement that oscillates around the center of the vector and ensures the combinatorial and symmetry properties of \(A_\mu\).

\medskip

The detailed construction of \(\beta_\pi\) for the case \(n = 4\) is developed step by step in Appendix~\ref{app:beta_pi_n4}, where its definition is illustrated and its validity as a permutator is verified.

\paragraph{Illustrative example (\(n = 4\)).}
\[
\beta_\pi^{(4)} =
\begin{pmatrix}
0 & 1 & 0 & 0\\
0 & 0 & 0 & 1\\
1 & 0 & 0 & 0\\
0 & 0 & 1 & 0
\end{pmatrix}.
\]

\begin{remark}[\textbf{Properties of \(\beta_\pi\)}]\leavevmode
\begin{enumerate}
\item \emph{Permutation matrix}.  
      Each row and each column contains exactly one entry equal to \(1\), so
      \(\beta_\pi\) belongs to the matrix symmetric group
      \(S_n \simeq \operatorname{Perm}(n)\).

\item \emph{Order}.  
      The cycle generated by \(\beta_\pi\) visits all \(n\) positions before returning to the initial state; consequently,
      \[
      \beta_\pi^{\,n} = I_n, \qquad
      \beta_\pi^{\,k} \ne I_n\ \text{ for } 1 \le k < n,
      \]
      that is, the \emph{order} of \(\beta_\pi\) as an element of \(S_n\) is \(n\).

\item \emph{Cyclic powers}.  
      The iteration of \(\beta_\pi\) (i.e., its successive powers) generates a set of structured permutations that will later be used to define a matrix with symmetry and transition-uniqueness properties.

\item \emph{Inversion over the secondary diagonal}.  
      The power \(\beta_\pi^{\,n/2}\) (for even \(n\)) exactly matches the secondary identity matrix:
      \[
        \beta_\pi^{\,n/2} = I^*,
      \]
      where \(I^*\) is the matrix with \(1\)s on the secondary diagonal (that is, positions \((i,j)\) such that \(i + j = n + 1\)) and zeros elsewhere.  
      This property reflects the internal symmetry of \(\beta_\pi\) and will be used later in the construction of \(A_\mu^n\).
\end{enumerate}
\end{remark}

\subsection{Fundamental harmonic structure}
\label{def:harmonic}

Given a starting vector \(\vec{a} = (1, 2, \dots, n)\) with even \(n\), we define the \textbf{fundamental harmonic structure} as a weighted combination of iterated permutations, in which the elements of the vector are distributed over two base structures: the identity matrix \(I\) (main diagonal) and the reversed identity \(I^*\) (secondary diagonal).

The general formula is:
\begin{equation}
   A_{\mu f} = \left[\sum_{i=1}^{\frac{n}{2}} \left(a_{2i-1} \cdot I + a_{n-2i+2} \cdot I^* \right) \cdot \beta_\pi^{\left(\frac{n}{2} - i + 1\right)} \right]^T 
   \label{eq:harmonic-fund}
\end{equation}
\label{app:harmonic-construction}

where:
\begin{itemize}
    \item \(a_k\) are the elements of the vector \(\vec{a}\),
    \item \(I\) is the usual identity matrix,
    \item \(I^*\) is the identity arranged along the secondary diagonal (\(\beta_\pi^{\frac{n}{2}} = I^*\)),
    \item \(\beta_\pi^{(m)}\) denotes the \(m\)-th power (iteration) of the previously defined permutator.
\end{itemize}

The final transposition ensures that the transitions are distributed across the \emph{rows}, rather than across the \emph{columns}, as would naturally occur in standard matrix multiplication.

\medskip

Finally, a reordering matrix \(M\) is applied to adjust the order of rows or columns (as appropriate), in order to obtain a standard and fully symmetric version of the harmonic matrix:

\begin{equation}
    A_\mu = A_{\mu f} \cdot M
\end{equation}

\medskip

\noindent The explicit results of this construction for \(n = 6\) and \(n = 8\) are shown below:

\[
A_\mu^{(6)} =
\begin{pmatrix}
1 & 2 & 3 & 4 & 5 & 6 \\
2 & 4 & 1 & 6 & 3 & 5 \\
3 & 1 & 5 & 2 & 6 & 4 \\
4 & 6 & 2 & 5 & 1 & 3 \\
5 & 3 & 6 & 1 & 4 & 2 \\
6 & 5 & 4 & 3 & 2 & 1 \\
\end{pmatrix}
\quad
A_\mu^{(8)} =
\begin{pmatrix}
1 & 2 & 3 & 4 & 5 & 6 & 7 & 8 \\
2 & 4 & 1 & 6 & 3 & 8 & 5 & 7 \\
3 & 1 & 5 & 2 & 7 & 4 & 8 & 6 \\
4 & 6 & 2 & 8 & 1 & 7 & 3 & 5 \\
5 & 3 & 7 & 1 & 8 & 2 & 6 & 4 \\
6 & 8 & 4 & 7 & 2 & 5 & 1 & 3 \\
7 & 5 & 8 & 3 & 6 & 1 & 4 & 2 \\
8 & 7 & 6 & 5 & 4 & 3 & 2 & 1 \\
\end{pmatrix}
\]

\medskip
The details of the constructive algorithm, as well as the complete example for \(n = 4\), are presented in Appendix~\ref{app:harmonic-construction}.

\section{Fundamental Properties}

\subsection{Symmetry and gaussian summation}

Harmonic matrices \(A_\mu\) exhibit full dihedral symmetry, remaining invariant under reflection across the vertical axis, horizontal axis, main diagonal, and secondary diagonal.
This includes the classical symmetry:
\[
A_\mu = A_\mu^T,
\]
as well as invariance under horizontal and diagonal reflections.

\medskip

Moreover, when using numerical representations of the position vector in the form \(\vec{a_i} = (1, 2, \dots, n)\), the following Gaussian property holds for each row:
\[
a_{i,j} + a_{i,n-j+1} = n + 1.
\]

This allows the matrix to be reconstructed from only half of the information, thus reducing its computational generation cost.

\subsection{Hierarchical reduction by elimination of the secondary band}
\label{sec:reduccion-banda-diagonal}

Let \(A_\mu^{(n)} = (a_{ij})_{1 \le i,j \le n}\) be a harmonic matrix of order \(n\), with \(n \ge 4\) even, and let \(m := n - 2\).

\paragraph{Definition of the band to be eliminated.}
We will eliminate the elements from the band centered on the secondary diagonal, consisting of:
\[
\boxed{B := \bigl\{(i,j)\,\bigl|\,|i + j - (n + 1)| \le 1\bigr\}}.
\tag{1}
\]

\paragraph{Resulting regions.}
After removing \(B\) from \(A_\mu^{(n)}\), two triangular submatrices of order \(m\) remain:
\[
M^{\text{up}}\quad\text{(upper-left triangle)},\qquad
M^{\text{down}}\quad\text{(lower-right triangle)}.
\]

\paragraph{Reduction operator \(\mathcal{R}_\mu\).}
The reduced matrix is obtained by merging both regions along the shared secondary diagonal:
\[
\boxed{H^{(m)} := \mathcal{R}_\mu\!\bigl(A_\mu^{(n)}\bigr)
              := M^{\text{up}} \cup_{\Delta_{\mathrm{sec}}} M^{\text{down}}},
\tag{2}
\]
where \(\cup_{\Delta_{\mathrm{sec}}}\) indicates that the coinciding values along the secondary diagonal are retained only once (as they are identical by symmetry).
This procedure preserves the harmonic structure and allows the hierarchical construction of a family \(H^{(m)}\) from increasing orders.

\subsection{Nesting theorem.}

\begin{theorem}[Harmonic nesting]\label{thm:mu-nesting}
For every even \(n \ge 4\), it holds that
\[
\mathcal{R}_\mu\!\bigl(A_\mu^{(n)}\bigr)\;\cong\;A_\mu^{(n-2)}.
\]
That is, the operator \(\mathcal{R}_\mu\) produces another harmonic matrix of the same type, but of order exactly \(n - 2\).
\end{theorem}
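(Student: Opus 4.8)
The plan is to prove the stronger statement that $\mathcal{R}_\mu\bigl(A_\mu^{(n)}\bigr)$ equals $A_\mu^{(n-2)}$ on the nose, which of course implies the claimed isomorphism, by exhibiting an entrywise recursion between the fundamental matrices of consecutive even orders. Writing $m=n-2$ and $a^{(n)}_{i,j}$ for the entries, the whole theorem reduces to the identity
\[
a^{(n)}_{i,j}=a^{(m)}_{i,j}\ \ (i+j\le m+1),\qquad a^{(n)}_{i+2,\,j+2}=a^{(m)}_{i,j}\ \ (i+j\ge m+1),
\]
for all $1\le i,j\le m$. The two cases must agree on the shared anti-diagonal $i+j=m+1$, which they do precisely because they describe the upper boundary ($i+j=n-1$) and the lower boundary ($i+j=n+3$) of the deleted band $B$, and these are exchanged value for value by the secondary-diagonal reflection. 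Once this identity is in hand, the glued matrix $H^{(m)}$ literally \emph{is} $A_\mu^{(m)}$, hence automatically harmonic and equal (in particular isomorphic) to $A_\mu^{(n-2)}$.

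Before attacking the identity I would record two structural facts that make $\mathcal{R}_\mu$ well defined and pin down its codomain. First, I would invoke the full dihedral symmetry (Property P1) in the form of the secondary-diagonal reflection $a^{(n)}_{i,j}=a^{(n)}_{\,n+1-j,\;n+1-i}$: this carries the anti-diagonal $i+j=n-1$ onto $i+j=n+3$ value for value, so the overlapping entries in $M^{\mathrm{up}}\cup_{\Delta_{\mathrm{sec}}}M^{\mathrm{down}}$ coincide and the merge is unambiguous; a direct count also shows the two triangles (sharing one diagonal) tile an $m\times m$ array exactly, since $2\binom{n-1}{2}-(n-2)=(n-2)^2$. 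Second, I would prove a \emph{symbol-localization lemma}: the two largest symbols $n-1$ and $n$ occur only inside the band $B$, so they never survive and every entry of $H^{(m)}$ lies in $[m]$. For the fundamental construction this can be read off the placement of $n-1$ and $n$ dictated by $\beta_\pi$ (equivalently from the Gaussian relation $a_{i,j}+a_{i,n-j+1}=n+1$ together with the first row being the identity), which forces these symbols onto the anti-diagonals $i+j\in\{n,n+1,n+2\}$.

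The substance of the argument is the entrywise recursion itself, which I would derive from the closed form implicit in \eqref{eq:harmonic-fund}. Expanding the construction, each entry of the untransposed block is a single coefficient $c_q$ selected by a discrete logarithm in the $n$-cycle $\pi$ generated by $\beta_\pi$ (the unique power $q$ with $\pi^{q}$ carrying one index to the other), where $c_q=n+1-2q$ for $1\le q\le n/2$ and $c_q=2q-n$ for $n/2< q\le n$; the final reorder $M$ then normalizes this to the displayed $A_\mu^{(n)}$. Proving the recursion therefore amounts to a \emph{compatibility lemma} for the cyclic structures of consecutive orders: that the restriction of $\pi^{(n)}$ and of the coefficient profile $c^{(n)}$ to the surviving triangles, after the shift $(i,j)\mapsto(i+2,j+2)$ on the lower block and after absorbing $M$, reproduces $\pi^{(m)}$ and $c^{(m)}$. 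I expect this to be the main obstacle, because $\beta_\pi$, its cycle $\pi$, and the reorder $M$ all change with $n$, so the reduction is an induced rather than a literal sub-permutation; the delicate point is the \emph{junction transition} at the secondary diagonal of $H^{(m)}$, namely $a^{(n)}_{r,\,m+1-r}\to a^{(n)}_{r+2,\,m+4-r}$, which is a pair absent from the original rows and must be shown to coincide exactly with the transition that the band removal deleted.

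If a clean closed form proves too unwieldy to manage uniformly in $n$, I would fall back on a purely combinatorial route: verify harmonicity of $H^{(m)}$ directly, then identify it. Viewing each row of $A_\mu^{(n)}$ as a Hamiltonian path on $[n]$ and the matrix as an arc-decomposition of the complete digraph $K_n^{*}$, I would show that the spliced row $r$ of $H^{(m)}$—original row $r$ up to the diagonal, then original row $r+2$ beyond it—is a permutation of $[m]$, using the symbol-localization lemma and symmetry, and then invoke counting: there are exactly $m(m-1)$ transition slots and $m(m-1)$ ordered pairs on $[m]$, so coverage reduces to the no-repetition of pairs, which follows once the junction transitions are matched one-to-one with the arcs deleted by removing the symbols $n-1$ and $n$. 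Either way the junction bookkeeping is the crux; everything else is symmetry and counting.
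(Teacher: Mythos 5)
Your proposal is correct in strategy and takes a genuinely different, and in fact stronger, route than the paper. The paper's own proof is a three-step sketch: rows of \(M^{\text{up}}\) and \(M^{\text{down}}\) glue to full permutations; \(M^{\text{up}}\) supposedly carries each pair \((a,b)\) with \(a<b\le m\) exactly once and \(M^{\text{down}}\) each pair with \(a>b\); and the fused matrix is simply asserted to be already in standard form. You instead aim at equality on the nose via the entrywise recursion \(a^{(n)}_{i,j}=a^{(m)}_{i,j}\) for \(i+j\le m+1\) and \(a^{(n)}_{i+2,j+2}=a^{(m)}_{i,j}\) for \(i+j\ge m+1\), and your supporting checks are sound: the secondary-diagonal reflection \(a^{(n)}_{i,j}=a^{(n)}_{n+1-j,\,n+1-i}\) applied on the diagonal \(i+j=n-1\) gives exactly \(a^{(n)}_{i,j}=a^{(n)}_{i+2,j+2}\), which is the consistency your two-case formula needs; the tiling count \(2\binom{n-1}{2}-(n-2)=(n-2)^2\) is right; your coefficient profile \(c_q=n+1-2q\) for \(q\le n/2\), \(c_q=2q-n\) for \(q>n/2\) agrees with Equation~\eqref{eq:harmonic-fund}; and for \(n=8\) both the recursion and the localization of the symbols \(7,8\) to the band \(B\) verify against the paper's displayed matrices. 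Two points of comparison are worth recording. First, your fallback route is essentially the paper's argument done correctly: the paper's step (2) is false as literally stated — in \(A_\mu^{(8)}\) the prefix \(2,4,1,6,3\) of row 2 lies in \(M^{\text{up}}\) yet contains the decreasing transition \(4\to 1\) — whereas your location-based repair works: every transition of \(H^{(m)}\), including each junction transition, which thanks to the overlap on the shared diagonal equals the consecutive pair \(a^{(n)}_{i+2,j+2}\to a^{(n)}_{i+2,j+3}\) of row \(i+2\), occupies a distinct position of \(A_\mu^{(n)}\), so pairwise distinctness is inherited from harmonicity of \(A_\mu^{(n)}\) and full coverage follows from counting \(m(m-1)\) slots against \(m(m-1)\) pairs on \([m]\). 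Second, you rightly refuse to stop at harmonicity of \(H^{(m)}\): since for \(m=6\) there are two non-equivalent harmonic structures, the paper's step (3) (``canonical alignment'') is doing real identification work that it never justifies, and your recursion (or an equivalent canonical-form argument) is precisely what discharges it. The one item you leave open — the compatibility lemma relating \(\beta_\pi\), its cycle, the coefficient profile, and the reorder \(M\) across consecutive orders — is flagged honestly and is no less complete than the paper's own sketch; filling it in would make your version strictly stronger than what the paper offers.
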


\begin{proof}[Proof sketch]\leavevmode
\begin{enumerate}
  \item \textit{Permutative rows.}\;
        The rows of \(M^{\text{up}}\) (prefixes) and \(M^{\text{down}}\) (suffixes)
        are subwords of the rows of \(A_\mu^{(n)}\); their overlap preserves
        the property of being full permutations.
  \item \textit{Unique pair coverage.}\;
        In \(M^{\text{up}}\), each pair \((a, b)\) with \(a < b \le m\) appears exactly once; in \(M^{\text{down}}\), each \((a, b)\) with \(a > b \le m\) appears once. Their union forms \(H^{(m)}\), which still covers all directed pairs exactly once.
  \item \textit{Canonical alignment.}\;
        The \(m\) surviving symbols (\(1, \dots, m\)) already occupy
        the positions required for the standard form,
        so the fused matrix is precisely
        \(A_\mu^{(n-2)}\).
\end{enumerate}
\end{proof}

\paragraph{Example \(\boldsymbol{n=8\to m=6}\).}

\[
\small
A_\mu^{(8)}=
\begin{pmatrix}
\textbf1&\textbf2&\textbf3&\textbf4&\textbf5&\textbf6&7&8\\
\textbf2&\textbf4&\textbf1&\textbf6&\textbf3&8&5&7\\
\textbf{3}&\textbf1&\textbf5&\textbf2&7&4&8&\textbf6\\
\textbf4&\textbf6&\textbf2&8&1&7&\textbf3&\textbf5\\
\textbf5&\textbf3&7&1&8&\textbf2&\textbf6&\textbf4\\
\textbf6&8&4&7&\textbf2&\textbf5&\textbf1&\textbf3\\
7&5&8&\textbf3&\textbf6&\textbf1&\textbf4&\textbf2\\
8&7&\textbf6&\textbf5&\textbf4&\textbf3&\textbf2&\textbf1
\end{pmatrix}
\;\;\xrightarrow{\;\mathcal{R}_\mu\;}\;\;
H_\mu^{(6)}=
\begin{pmatrix}
1&2&3&4&5&6\\
2&4&1&6&3&5\\
3&1&5&2&6&4\\
4&6&2&5&1&3\\
5&3&6&1&4&2\\
6&5&4&3&2&1
\end{pmatrix}
\cong A_\mu^{(6)}.
\]

\paragraph{Hierarchical chain.}

By iterating \(\mathcal{R}_\mu\), we obtain
\[
A_\mu^{(n)} \xrightarrow{\mathcal{R}_\mu} A_\mu^{(n-2)} \xrightarrow{\mathcal{R}_\mu} A_\mu^{(n-4)} \xrightarrow{\mathcal{R}_\mu} \cdots \xrightarrow{\mathcal{R}_\mu} A_\mu^{(2)}.
\]

This shows that the family \(A_\mu\) is closed under a deterministic reduction that decreases the order by two units while preserving \emph{all} combinatorial and symmetry properties. To the best of our knowledge, no other known structure simultaneously satisfies:  
(i) perfect coverage of directed pairs,  
(ii) full dihedral symmetry, and  
(iii) exact nesting for every even \(n\).

\section{Derived Families: First- and Second-Order Degenerate Structures}

\subsection{Generation via alternative permutators}

One of the main questions that arose during the study of the harmonic structure was the following: do other distinct structures exist that preserve the same combinatorial and symmetry properties, and which also admit a permutator capable of generating and extending them for all \(n\)?

\medskip

The early sections of this article suggest preliminary evidence pointing to a possible affirmative answer. For certain values of \(n\) (for example, \(n = 6, 8, 10, \ldots\)), one can observe structures that, although positionally distinct from one another, preserve the essential qualities of the harmonic structure. Reformulating the question: could there exist other non-harmonic structures with the same combinatorial and symmetry properties, each associated with its own permutator that enables its generation and extension? The answer remains uncertain: \emph{perhaps}, although no formal proof has yet been established.

\medskip

In analyzing the case \(n = 6\), I identified a structure distinct from the harmonic one that admits a different positional function than \(\rho(i)\), leading to an alternative permutator, which we will denote by \(\alpha_\pi^{(6)}\). This new permutator differs from \(\beta_\pi\) and lacks the linear regularity that characterizes the latter. In fact, attempts to extend \(\alpha_\pi^{(6)}\) to higher values of \(n\) do not yield valid structures.

\medskip

Likewise, when analyzing higher orders (\(n = 8\) and \(n = 10\)), additional permutators are discovered, also distinct from \(\beta_\pi\), that give rise to structures with similar properties. These cases open the possibility of a broader family of alternative permutators, degenerate with respect to the harmonic structure, operating under different, yet-to-be-determined rules, which may be systematized in future research.

\subsection{First-order degenerate structures}

We refer to \textit{first-order degenerate structures}, denoted \({A'_\mu}^{n}\), as those matrices which, although not equivalent to the harmonic structure, preserve relevant combinatorial and symmetry properties. In particular, they maintain symmetry with respect to both the main and secondary diagonals, and in some cases can be constructed through the use of one or more permutators.

A representative example of this type of structure is the following:

\begin{center}
\[
    {A'_\mu}^{(6)} =
\begin{pmatrix}
1 & 2 & 3 & 4 & 5 & 6 \\
2 & 4 & 6 & 1 & 3 & 5 \\
3 & 6 & 2 & 5 & 1 & 4 \\
4 & 1 & 5 & 2 & 6 & 3 \\
5 & 3 & 1 & 6 & 4 & 2 \\
6 & 5 & 4 & 3 & 2 & 1 \\
\end{pmatrix}
\not\cong
{A_\mu}^{(6)} =
\begin{pmatrix}
1 & 2 & 3 & 4 & 5 & 6 \\
2 & 4 & 1 & 6 & 3 & 5 \\
3 & 1 & 5 & 2 & 6 & 4 \\
4 & 6 & 2 & 5 & 1 & 3 \\
5 & 3 & 6 & 1 & 4 & 2 \\
6 & 5 & 4 & 3 & 2 & 1 \\
\end{pmatrix}
\]
\end{center}

\subsubsection{Construction using \(\alpha_\pi\)}

The matrix \({A'_\mu}^{(6)}\) can be generated by means of a specific permutator of order 6, distinct from the harmonic permutator \(\beta_\pi\). This permutator, denoted \(\alpha_\pi^{(6)}\), defines an alternative positional transformation over the elements of the base set.

\begin{center}
\[
\alpha_\pi^{(6)} =
\begin{pmatrix}
0 & 0 & 0 & 0 & 1 & 0 \\
0 & 0 & 1 & 0 & 0 & 0 \\
1 & 0 & 0 & 0 & 0 & 0 \\
0 & 0 & 0 & 0 & 0 & 1 \\
0 & 0 & 0 & 1 & 0 & 0 \\
0 & 1 & 0 & 0 & 0 & 0 \\
\end{pmatrix}
\]
\end{center}

Although this structure does not admit a general formula like the one associated with the harmonic structure, it can be generated through a simple methodology based on the use of local permutators. The procedure is as follows:

\begin{itemize}
    \item Define the initial vector \(\vec{a} = (1, 2, 3, 4, 5, 6)\).
    \item Then, construct each row of the matrix as the result of multiplying the vector \(\vec{a}\) by successive powers of the permutator \(\alpha_\pi\).
\end{itemize}

Formally, for each \(i = 1\) to \(i = n\), we define:

\[
A_i = \vec{a} \cdot \alpha_\pi^{\,i-1}
\]

where \(A_i\) represents the \(i\)th row of the resulting matrix.

*\textit{This methodology also applies to construct the harmonic structure using \(\beta_\pi\)}*

\vspace{0.3cm}

We may conclude that, although this structure is not directly extensible to arbitrary \(n\), it does admit a local generator in the form of a permutation matrix. This finding suggests the possible existence of a complete family of permutation matrices capable of generating structures with combinatorial and symmetry properties similar to those of the harmonic structure. These matrices could become a useful tool for classifying different types of structures, based on the analysis of their corresponding local permutators.

\vspace{0.3cm}

\noindent
\textbf{Computational verification.}  
For the case \(n = 6\), an exhaustive brute-force verification has been carried out, consisting of the generation and evaluation of \emph{all possible matrices} that satisfy the harmonicity conditions (i.e., square matrices whose rows are permutations and that cover exactly once all pairs \((a,b)\) with \(a \ne b\)).

The result is conclusive: \emph{no other non-equivalent harmonic structure exists} beyond the two already presented, the one generated by the permutator \(\beta_\pi^{(6)}\), and the one generated by \(\alpha_\pi^{(6)}\).  
This demonstrates that, at least for \(n = 6\), the set of non-equivalent harmonic matrices is finite and fully classifiable.

\medskip

This finding supports the hypothesis that such structures are extraordinarily rare within the total space of permutations, and suggests that any future generalization will require new structural conditions that have not yet been identified.

\subsection{Second-order degenerate structures}

These structures, denoted \({A''_\mu}^{n}\), lack symmetry and, when transposed, violate the conditions of pair uniqueness. Their behavior is less predictable, and their structural utility is more limited.

\[
{A''_\mu}^{8}=
\begin{pmatrix}
1 & 2 & 3 & 4 & 5 & 6 & 7 & 8 \\
2 & 4 & 8 & 6 & 3 & 1 & 5 & 7 \\
3 & 8 & 5 & 2 & 7 & 4 & 1 & 6 \\
4 & 6 & 2 & 8 & 1 & 7 & 3 & 5 \\
5 & 3 & 7 & 1 & 8 & 2 & 6 & 4 \\
6 & 8 & 4 & 7 & 2 & 5 & 1 & 3 \\
7 & 5 & 8 & 3 & 6 & 1 & 4 & 2 \\
8 & 7 & 6 & 5 & 4 & 3 & 2 & 1 \\
\end{pmatrix}\longrightarrow
 ({A''_\mu}^{8})^T=
\begin{pmatrix}
1 & 2 & 3 & 4 & 5 & 6 & 7 & 8 \\
2 & 4 & \textcolor{blue}{8} & \textcolor{blue}{6} & 3 & 8 & 5 & 7 \\
3 & \textcolor{red}{8} & 5 & 2 & 7 & 4 & \textcolor{red}{8} & \textcolor{blue}{6} \\
4 & 6 & 2 & 8 & 1 & 7 & 3 & 5 \\
5 & 3 & 7 & 1 & 8 & 2 & 6 & 4 \\
6 & 1 & 4 & 7 & 2 & 5 & 1 & 3 \\
7 & 5 & 1 & 3 & 6 & 1 & 4 & 2 \\
8 & 7 & 6 & 5 & 4 & 3 & 2 & 1 \\
\end{pmatrix}
\]

\medskip
These matrices do not exhibit diagonal symmetries and, when transposed, repeat elements within rows, thereby violating the condition of uniqueness for combinatorial pairs.

\subsection{Comparison of the three families of structures}

\[
\begin{array}{ccc}
\begin{array}{c}
\begin{pmatrix}
1 & 2 & 3 & 4 & 5 & 6 & 7 & 8 \\
2 & 4 & 1 & 6 & 3 & 8 & 5 & 7 \\
3 & 1 & 5 & 2 & 1 & 4 & 5 & 6 \\
4 & 6 & 2 & 8 & 1 & 7 & 3 & 5 \\
5 & 3 & 7 & 1 & 8 & 2 & 6 & 4 \\
6 & 8 & 4 & 7 & 2 & 5 & 1 & 3 \\
7 & 5 & 8 & 3 & 6 & 1 & 4 & 2 \\
8 & 7 & 6 & 5 & 4 & 3 & 2 & 1 \\
\end{pmatrix} \\[4pt] \\
A_\mu^{8} \text{ (Fundamental Harmonic)}
\end{array}
&
\begin{array}{c}
\begin{pmatrix}
1 & 2 & 3 & 4 & 5 & 6 & 7 & 8 \\
2 & 4 & 8 & 6 & 3 & 1 & 5 & 7 \\
3 & 8 & 5 & 2 & 7 & 4 & 1 & 6 \\
4 & 6 & 2 & 8 & 1 & 7 & 3 & 5 \\
5 & 3 & 7 & 1 & 8 & 2 & 6 & 4 \\
6 & 1 & 4 & 7 & 2 & 5 & 8 & 3 \\
7 & 5 & 1 & 3 & 6 & 8 & 4 & 2 \\
8 & 7 & 6 & 5 & 4 & 3 & 2 & 1 \\
\end{pmatrix} \\[4pt] \\
{A'_\mu}^{8}\text{ (First-Order Degenerate)}
\end{array}
&
\begin{array}{c}
\begin{pmatrix}
1 & 2 & 3 & 4 & 5 & 6 & 7 & 8 \\
2 & 4 & 8 & 6 & 3 & 1 & 5 & 7 \\
3 & 8 & 5 & 2 & 7 & 4 & 1 & 6 \\
4 & 6 & 2 & 8 & 1 & 7 & 3 & 5 \\
5 & 3 & 7 & 1 & 8 & 2 & 6 & 4 \\
6 & 8 & 4 & 7 & 2 & 5 & 1 & 3 \\
7 & 5 & 8 & 3 & 6 & 1 & 4 & 2 \\
8 & 7 & 6 & 5 & 4 & 3 & 2 & 1 \\
\end{pmatrix} \\[4pt] \\
{A''_\mu}^{8}\text{ (Second-Order Degenerate)}
\end{array}
\end{array}
\]

\section{Practical Application: Construction of Complete Sudoku Boards}

\subsection{Motivation and general perspective}

One of the most compelling applications of harmonic structures lies in their ability to generate valid \(9 \times 9\) Sudoku boards in a fully deterministic manner, based on purely structural principles. Unlike many common methods used in Sudoku generation, which rely on random techniques, brute force, backtracking search, or systematic shifts in Latin squares~\cite{latin-squares}, the approach proposed here is based on an algebraic construction that defines global positional relationships from the outset.

\medskip

This method does not attempt to explore the full set of possible solutions (whose number is astronomically large), but rather presents a canonical instance that emerges directly from the combinatorial properties of a suitably adapted harmonic matrix. This solution can be interpreted as a valid structural basis, from which others may be generated via symbol relabeling, as is standard in Sudoku solution families.

\medskip

In contrast to approaches that require verifying the game's rules after generation, here validity is guaranteed by the construction itself. This reflects one of the fundamental principles of this work: that certain properties, uniqueness, coverage, and symmetry, can be enforced in advance through robust algebraic architectures, without relying on exploratory or corrective procedures.

\medskip

This chapter presents an explicit construction of a valid \(9 \times 9\) Sudoku board, derived from a specially adapted harmonic structure. Its validity arises directly from the internal properties that characterize these matrices.

\subsection{From harmonic structure to sudoku}

Throughout much of this research, structures of odd order were considered of limited utility, as they do not natively satisfy the condition of complete uniqueness of ordered pairs. Nevertheless, the following question emerged: is it possible to adapt the generating matrix \(\beta_\pi\) so that it produces useful structures of odd order? And what properties might emerge from such an adaptation, even if strict pairwise uniqueness is no longer preserved?

\medskip

To explore this possibility, a strategy was proposed based on reducing a valid permutator of even order. For example, starting from a permutator \(\beta_\pi\) of order 10, one removes the last row and column to obtain a \(9 \times 9\) matrix.

\[
\begin{array}{cc}
\begin{array}{c}
\left[
\begin{array}{ccccccccc|c}
0 & 1 & 0 & 0 & 0 & 0 & 0 & 0 & 0 & 0 \\
0 & 0 & 0 & 1 & 0 & 0 & 0 & 0 & 0 & 0 \\
1 & 0 & 0 & 0 & 0 & 0 & 0 & 0 & 0 & 0 \\
0 & 0 & 0 & 0 & 0 & 1 & 0 & 0 & 0 & 0 \\
0 & 0 & 1 & 0 & 0 & 0 & 0 & 0 & 0 & 0 \\
0 & 0 & 0 & 0 & 0 & 0 & 0 & 1 & 0 & 0 \\
0 & 0 & 0 & 0 & 1 & 0 & 0 & 0 & 0 & 0 \\
0 & 0 & 0 & 0 & 0 & 0 & 0 & 0 & 0 & 1 \\
0 & 0 & 0 & 0 & 0 & 0 & 1 & 0 & 0 & 0 \\
\hline
0 & 0 & 0 & 0 & 0 & 0 & 0 & 0 & 1 & 0 \\
\end{array}
\right]
\end{array}
&
\quad
\Rightarrow
\quad
\beta_\pi'= 
\begin{array}{c}
\left[
\begin{array}{ccccccccc}
0 & 1 & 0 & 0 & 0 & 0 & 0 & 0 & 0 \\
0 & 0 & 0 & 1 & 0 & 0 & 0 & 0 & 0 \\
1 & 0 & 0 & 0 & 0 & 0 & 0 & 0 & 0 \\
0 & 0 & 0 & 0 & 0 & 1 & 0 & 0 & 0 \\
0 & 0 & 1 & 0 & 0 & 0 & 0 & 0 & 0 \\
0 & 0 & 0 & 0 & 0 & 0 & 0 & 1 & 0 \\
0 & 0 & 0 & 0 & 1 & 0 & 0 & 0 & 0 \\
0 & 0 & 0 & 0 & 0 & 0 & 0 & 0 & \textcolor{red}{1} \\
0 & 0 & 0 & 0 & 0 & 0 & 1 & 0 & 0 \\
\end{array}
\right]
\end{array}
\end{array}
\]

\medskip

This reduction yields an incomplete structure: by removing the last row and column, one of the columns loses its sole entry equal to \(1\), and the resulting matrix is no longer a valid permutation matrix. To restore validity, the empty column is identified and completed with a \(1\), resulting in a new valid permutator matrix \(\beta'_\pi\) of odd order.

\medskip

Now, from this matrix \(\beta'_\pi\), it is possible to construct a structure similar to the harmonic one by following the iterative formula:

\[
A_i = \vec{a} \cdot \beta_\pi^{'\,i-1}
\]

where \(\vec{a}_1 = (1, 2, 3, \dots, 9)\), and \(i : 1 \longrightarrow n\), respectively.

\medskip

The resulting matrix is:

\[
\begin{pmatrix}
1 & 2 & 3 & 4 & 5 & 6 & 7 & 8 & 9 \\
3 & 1 & 5 & 2 & 7 & 4 & 9 & 6 & 8 \\
5 & 3 & 7 & 1 & 9 & 2 & 8 & 4 & 6 \\
7 & 5 & 9 & 3 & 8 & 1 & 6 & 2 & 4 \\
9 & 7 & 8 & 5 & 6 & 3 & 4 & 1 & 2 \\
8 & 9 & 6 & 7 & 4 & 5 & 2 & 3 & 1 \\
6 & 8 & 4 & 9 & 2 & 7 & 1 & 5 & 3 \\
4 & 6 & 2 & 8 & 1 & 9 & 3 & 7 & 5 \\
2 & 4 & 1 & 6 & 3 & 8 & 5 & 9 & 7 \\
\end{pmatrix}
\]

\medskip

By dividing this matrix into three horizontal blocks of three rows each (A, B, and C), the rows are then rearranged by interleaving one row from each block per cycle:

\[
A =
\begin{pmatrix}
1 & 2 & 3 & 4 & 5 & 6 & 7 & 8 & 9 \\
3 & 1 & 5 & 2 & 7 & 4 & 9 & 6 & 8 \\
5 & 3 & 7 & 1 & 9 & 2 & 8 & 4 & 6 \\
\end{pmatrix}
\quad
B =
\begin{pmatrix}
7 & 5 & 9 & 3 & 8 & 1 & 6 & 2 & 4 \\
9 & 7 & 8 & 5 & 6 & 3 & 4 & 1 & 2 \\
8 & 9 & 6 & 7 & 4 & 5 & 2 & 3 & 1 \\
\end{pmatrix}
\quad
C =
\begin{pmatrix}
6 & 8 & 4 & 9 & 2 & 7 & 1 & 5 & 3 \\
4 & 6 & 2 & 8 & 1 & 9 & 3 & 7 & 5 \\
2 & 4 & 1 & 6 & 3 & 8 & 5 & 9 & 7 \\
\end{pmatrix}
\]

El resultado es:
\[
\left[
\begin{array}{r|ccccccccc}
A_1 & 1 & 2 & 3 & 4 & 5 & 6 & 7 & 8 & 9 \\
B_1 & 7 & 5 & 9 & 3 & 8 & 1 & 6 & 2 & 4 \\
C_1 & 6 & 8 & 4 & 9 & 2 & 7 & 1 & 5 & 3 \\
A_2 & 3 & 1 & 5 & 2 & 7 & 4 & 9 & 6 & 8 \\
B_2 & 9 & 7 & 8 & 5 & 6 & 3 & 4 & 1 & 2 \\
C_2 & 4 & 6 & 2 & 8 & 1 & 9 & 3 & 7 & 5 \\
A_3 & 5 & 3 & 7 & 1 & 9 & 2 & 8 & 4 & 6 \\
B_3 & 8 & 9 & 6 & 7 & 4 & 5 & 2 & 3 & 1 \\
C_3 & 2 & 4 & 1 & 6 & 3 & 8 & 5 & 9 & 7 \\
\end{array}
\right]
\quad
\longrightarrow
\quad
\begin{array}{ccc|ccc|ccc}
1 & 2 & 3 & 4 & 5 & 6 & 7 & 8 & 9 \\
7 & 5 & 9 & 3 & 8 & 1 & 6 & 2 & 4 \\
6 & 8 & 4 & 9 & 2 & 7 & 1 & 5 & 3 \\
\hline
3 & 1 & 5 & 2 & 7 & 4 & 9 & 6 & 8 \\
9 & 7 & 8 & 5 & 6 & 3 & 4 & 1 & 2 \\
4 & 6 & 2 & 8 & 1 & 9 & 3 & 7 & 5 \\
\hline
5 & 3 & 7 & 1 & 9 & 2 & 8 & 4 & 6 \\
8 & 9 & 6 & 7 & 4 & 5 & 2 & 3 & 1 \\
2 & 4 & 1 & 6 & 3 & 8 & 5 & 9 & 7 \\
\end{array}
\]
\medskip

This matrix is a valid Sudoku board: each row, column, and \(3 \times 3\) subgrid contains all digits from 1 to 9 without repetition. Thus, from a purely positional structure, without imposing numerical constraints, a concrete and perfectly valid solution has been derived.

\medskip

This procedure not only confirms the practical applicability of harmonic structures in complex game-theoretic contexts, but also introduces a deterministic pathway for generating entire families of valid Sudoku boards from a structural principle, without requiring post hoc validity checks or random mechanisms.

\medskip

The versatility of harmonic structures opens a formal pathway for exploring new families of solutions in other systems, whether recreational, cryptographic, or encoding-based, that rely on positional constraints.

\section{General Conclusions}

This research has demonstrated that it is possible to construct a family of algebraic structures, referred to as harmonic matrices, which possess highly organized, symmetric, and self-structured properties. Starting from a simple base vector and a deterministic permutator, these matrices can be generated iteratively for any order \(n\), and exhibit patterns of recurrence, reconstruction, and internal hierarchy that transcend their initial definition.

\medskip

One of the central findings is that these matrices not only exhibit formal elegance, but also considerable practical potential. Their ability to cover all possible pairwise combinations in even-order structures, as well as their extension to degenerate forms and odd-order matrices, opens a wide range of theoretical and applied possibilities. In particular, it has been shown that they can be used to deterministically generate valid Sudoku boards of arbitrary order, offering a concrete and accessible demonstration of their constructive power.

\medskip

The approach proposed here redefines the traditional way of tackling certain combinatorial problems: instead of seeking solutions that satisfy externally imposed constraints, it begins from a structure whose very nature ensures the validity of the results. This philosophy, focused on prior structural design, may have far-reaching implications in fields such as cryptography, coding theory, synthetic data generation, and algorithmic modeling of complex systems.

\medskip

In summary, harmonic structures represent a bridge between abstract symmetry and concrete application. This work has only begun to explore their potential. Future research is open to new classes of permutators, emerging topological properties, and possible connections with other mathematical and computational disciplines. Harmony, understood as a generative principle, promises to be a powerful tool for algorithmic thinking in the 21st century.

\clearpage
\appendix
\section*{Appendices}
\addcontentsline{toc}{section}{Appendices}

\section{Detailed Computation of \(\beta_\pi\) for \(n = 4\)}
\label{app:beta_pi_n4}

We begin from Definition~\ref{def:beta_pi}.  
For \(n = 4\), the displacement function is:

\[
\rho(i) =
\begin{cases}
1, & i = 1,\\[2pt]
2(-1)^{\,i}, & i = 2,3,\\[2pt]
-1, & i = 4.
\end{cases}
\]

We evaluate \(\rho(i)\) row by row:

\begin{center}
\begin{tabular}{c|c|c}
\(i\) & \(\rho(i)\) & position \(j = i + \rho(i)\) \\\hline
1   & 1  & 2  \\
2   & 2  & 4  \\
3   & -2 & 1  \\
4   & -1 & 3
\end{tabular}
\end{center}

By inserting each \((i, j)\) into the condition \([\beta_\pi]_{i,j} = 1\) and completing the remaining entries with zeros, we obtain, as before:

\[
\beta_\pi^{(4)} =
\begin{pmatrix}
0 & 1 & 0 & 0 \\
0 & 0 & 0 & 1 \\
1 & 0 & 0 & 0 \\
0 & 0 & 1 & 0
\end{pmatrix}.
\]

The same technique generalizes without substantial changes to any even \(n\).

\section{Explicit Construction of the Harmonic Matrix of Order 4}
\label{app:harmonic-construction}

In this appendix, we develop step by step the Equation~\eqref{eq:harmonic-fund}. 

\subsection{General procedure}

\begin{enumerate}
  \item \textbf{Base vector.} Let
        \(\vec{a} = (1, 2, \dots, n)\), with even \(n\), and define
        \(m := \tfrac{n}{2}\).
  \item \textbf{Weighted summation.} For each \(i = 1, \dots, m\), compute
        \[
          S_i \;=\;
          \bigl(a_{2i - 1}\,I + a_{n - 2i + 2}\,I^*\bigr)\;
          \beta_{\pi}^{\bigl(m - i + 1\bigr)}.
        \]
  \item \textbf{Transposition.} Sum all \(S_i\) and transpose the result to obtain \(A_{\mu f}\).
  \item \textbf{Reordering.} Multiply
        \(A_\mu = A_{\mu f}\,M\), where \(M\) is the inverted-band matrix.
\end{enumerate}

\subsection{Detailed example: \(n = 4\)}

We begin with the vector \(\vec{a} = (1, 2, 3, 4)\). Let us analyze each term in the summation:
\paragraph{Step 1: $i = 1$}

\begin{itemize}
    \item
    \[
        a_{2i - 1} = a_1 = 1 \qquad
        a_{n - 2i + 2} = a_4 = 4 \qquad
        \beta_\pi^{\frac{n}{2} - i + 1} = \beta_\pi^{2}
    \]

    \[
        a_1 \cdot I =
        \begin{pmatrix}
        1 & 0 & 0 & 0\\
        0 & 1 & 0 & 0\\
        0 & 0 & 1 & 0\\
        0 & 0 & 0 & 1
        \end{pmatrix}
        \qquad
        a_4 \cdot I^* =
        \begin{pmatrix}
        0 & 0 & 0 & 4\\
        0 & 0 & 4 & 0\\
        0 & 4 & 0 & 0\\
        4 & 0 & 0 & 0
        \end{pmatrix}
        \qquad
        \beta_\pi^{2} =
        \begin{pmatrix}
        0 & 0 & 0 & 1\\
        0 & 0 & 1 & 0\\
        0 & 1 & 0 & 0\\
        1 & 0 & 0 & 0
        \end{pmatrix}
    \]

    \item
    Product:
    \[
    (a_1 I + a_4 I^*) \cdot \beta_\pi^{(2)} =
    \left(
    \begin{pmatrix}
    1 & 0 & 0 & 0\\
    0 & 1 & 0 & 0\\
    0 & 0 & 1 & 0\\
    0 & 0 & 0 & 1
    \end{pmatrix}
    +
    \begin{pmatrix}
    0 & 0 & 0 & 4\\
    0 & 0 & 4 & 0\\
    0 & 4 & 0 & 0\\
    4 & 0 & 0 & 0
    \end{pmatrix}
    \right)
    \begin{pmatrix}
    0 & 0 & 0 & 1\\
    0 & 0 & 1 & 0\\
    0 & 1 & 0 & 0\\
    1 & 0 & 0 & 0
    \end{pmatrix}
    =
    \begin{pmatrix}
    4 & 0 & 0 & 1\\
    0 & 4 & 1 & 0\\
    0 & 1 & 4 & 0\\
    1 & 0 & 0 & 4
    \end{pmatrix}
    \]
\end{itemize}

\paragraph{Step 2: $i = 2$}

\begin{itemize}
    \item
    \[
        a_{2i - 1} = a_3 = 3 \qquad
        a_{n - 2i + 2} = a_2 = 2 \qquad
        \beta_\pi^{\frac{n}{2} - i + 1} = \beta_\pi^{1}
    \]

    \[
        a_3 \cdot I =
        \begin{pmatrix}
        3 & 0 & 0 & 0\\
        0 & 3 & 0 & 0\\
        0 & 0 & 3 & 0\\
        0 & 0 & 0 & 3
        \end{pmatrix}
        \qquad
        a_2 \cdot I^* =
        \begin{pmatrix}
        0 & 0 & 0 & 2\\
        0 & 0 & 2 & 0\\
        0 & 2 & 0 & 0\\
        2 & 0 & 0 & 0
        \end{pmatrix}
        \qquad
        \beta_\pi^{1} =
        \begin{pmatrix}
        0 & 1 & 0 & 0\\
        0 & 0 & 0 & 1\\
        1 & 0 & 0 & 0\\
        0 & 0 & 1 & 0
        \end{pmatrix}
    \]

    \item
    Product:
    \[
    (a_3 I + a_2 I^*) \cdot \beta_\pi^{(1)} =
    \left(
    \begin{pmatrix}
    3 & 0 & 0 & 0\\
    0 & 3 & 0 & 0\\
    0 & 0 & 3 & 0\\
    0 & 0 & 0 & 3
    \end{pmatrix}
    +
    \begin{pmatrix}
    0 & 0 & 0 & 2\\
    0 & 0 & 2 & 0\\
    0 & 2 & 0 & 0\\
    2 & 0 & 0 & 0
    \end{pmatrix}
    \right)
    \begin{pmatrix}
    0 & 1 & 0 & 0\\
    0 & 0 & 0 & 1\\
    1 & 0 & 0 & 0\\
    0 & 0 & 1 & 0
    \end{pmatrix}
    =
    \begin{pmatrix}
    0 & 3 & 2 & 0\\
    2 & 0 & 0 & 3\\
    3 & 0 & 0 & 2\\
    0 & 2 & 3 & 0
    \end{pmatrix}
    \]
\end{itemize}

\paragraph{Total sum (before transposition):}

\[
\begin{pmatrix}
4 & 0 & 0 & 1\\
0 & 4 & 1 & 0\\
0 & 1 & 4 & 0\\
1 & 0 & 0 & 4
\end{pmatrix}
+
\begin{pmatrix}
0 & 3 & 2 & 0\\
2 & 0 & 0 & 3\\
3 & 0 & 0 & 2\\
0 & 2 & 3 & 0
\end{pmatrix}
=
\begin{pmatrix}
4 & 3 & 2 & 1\\
2 & 4 & 1 & 3\\
3 & 1 & 4 & 2\\
1 & 2 & 3 & 4
\end{pmatrix}
\]

\paragraph{Application of transposition and reordering (\(M\)):}

\[
A_\mu = A_{\mu f}^T \cdot M =
\begin{pmatrix}
1 & 2 & 3 & 4\\
2 & 4 & 1 & 3\\
3 & 1 & 4 & 2\\
4 & 3 & 2 & 1
\end{pmatrix}
\]

This final result is the harmonic structure \(A_\mu^{(4)}\), satisfying the properties of symmetry and uniqueness of ordered pairs.

\end{document}